\documentclass[11pt,a4paper]{article}
\usepackage[utf8]{inputenc}
\usepackage{amsmath,amssymb,amsthm}
\usepackage{mathtools}
\usepackage{booktabs}
\usepackage{tabularx}
\usepackage{array}
\usepackage{microtype}
\usepackage{geometry}
\usepackage{algorithm}
\usepackage{algpseudocode}

\makeatletter

\newenvironment{breakablealgorithm}
  {%
    \begin{center}
      \refstepcounter{algorithm}%
      \hrule height.8pt depth0pt \kern2pt%
      \renewcommand{\caption}[2][\relax]{%
        {\raggedright\textbf{Algorithm~\thealgorithm} ##2\par}%
        \ifx\relax##1\relax
          \addcontentsline{loa}{algorithm}{\protect\numberline{\thealgorithm}##2}%
        \else
          \addcontentsline{loa}{algorithm}{\protect\numberline{\thealgorithm}##1}%
        \fi
        \kern2pt\hrule\kern2pt
      }%
  }
  {%
      \kern2pt\hrule\relax%
    \end{center}
  }
\makeatother
\usepackage[authoryear,round]{natbib}
\setcitestyle{aysep={,},citesep={;},open={(},close={)}}

\usepackage[colorlinks=true,linkcolor=blue,citecolor=blue,urlcolor=blue]{hyperref}

\newtheorem{theorem}{Theorem}[section]

\newtheorem{proposition}[theorem]{Proposition}

\theoremstyle{definition}
\newtheorem{definition}[theorem]{Definition}

\theoremstyle{remark}

\newcommand{\R}{\mathbb{R}}
\newcommand{\C}{\mathbb{C}}
\newcommand{\g}{\mathfrak{g}}
\newcommand{\n}{\mathfrak{n}}
\newcommand{\hh}{\mathfrak{h}}
\newcommand{\s}{\mathfrak{s}}
\newcommand{\rr}{\mathfrak{r}}
\newcommand{\ad}{\operatorname{ad}}
\newcommand{\Aut}{\operatorname{Aut}}

\newcommand{\tr}{\operatorname{tr}}
\newcommand{\diag}{\operatorname{diag}}
\newcommand{\spanop}{\operatorname{span}}

\title{\textbf{Classification of real Lie algebras of \\ dimensions one, two and three}}
\author{Mehdi Nadjafikhah\thanks{School of Mathematics and Computer Sciences, Iran University of Science and Technology,
Narmak, Tehran 16846-13114, Iran. email: m\_nadjafikhah@iust.ac.ir}}
\date{\today}

\begin{document}
\maketitle

\begin{abstract}
We present an exhaustive, self-contained, and coordinate-free structural classification of real Lie algebras of dimensions one, two, and three up to isomorphism. In dimensions one and two, the classification is governed entirely by the derived ideal and elementary derivation theory. In dimension three, we develop the classical exterior duality isomorphism $\Lambda^2\g \cong \g^* \otimes \Lambda^3\g$ pioneered by Bryant and Milnor, which reduces the quadratic Jacobi identity on the $3 \times 3$ structure matrix $C = S + \hat{a}$ to the linear orthogonality relation $S \cdot a = 0$, where $a = \frac{1}{2}\chi$ is the trace covector. 

We systematically bifurcate the three-dimensional category into unimodular ($a = 0$) and non-unimodular ($a \neq 0$) sectors, completely recovering the nine classical Bianchi isomorphism classes (Bianchi types I through IX) alongside their continuous moduli invariants. Furthermore, we establish the Levi--Malcev decomposition for low dimensions, compute the exact minimal representation dimension $\mu(\g)$ via Ado's Theorem by constructing explicit matrix realizations in $\mathfrak{gl}(\mu, \R)$, and formulate a deterministic, coordinate-free decision algorithm for algebra identification.
\end{abstract}

\tableofcontents

\section{Introduction and Historical Context}\label{sec:intro}

The classification of low-dimensional Lie algebras constitutes the foundational cornerstone of geometric mechanics, differential geometry, and theoretical physics. While the general classification of finite-dimensional Lie algebras beyond dimension six remains an intractable open problem, algebras of dimension $n \le 3$ exhibit an exceptional rigidity and geometric elegance.

In differential geometry, three-dimensional Lie algebras govern the local isometry groups of three-dimensional Riemannian and pseudo-Riemannian manifolds. In particular, six of Thurston's eight three-dimensional model geometries ($\mathbb{E}^3$, $\mathrm{Nil}$, $\mathrm{Solv}$, $\widetilde{\mathrm{SL}}(2,\R)$, $S^3$, and $\mathbb{H}^2 \times \R$) are realized directly as left-invariant metrics on three-dimensional Lie groups \citep{Thurston1997, Scott1983}. In theoretical cosmology, the classification of spatially homogeneous spacetimes in general relativity---the celebrated Bianchi cosmologies---depends fundamentally on partitioning three-dimensional real Lie algebras into unimodular and non-unimodular classes \citep{Bianchi1898, EllisMacCallum1969}.

The historical classification of three-dimensional real Lie algebras was achieved by \citet{Bianchi1898}, who partitioned them into eleven classes (nine standard types, with two families depending on a continuous parameter). Decades later, \citet{Milnor1976} revolutionized the subject by introducing an invariant geometric calculus on three-dimensional Lie algebras, linking the structure constants to a single symmetric tensor operator via Hodge duality and analyzing the Ricci curvature of left-invariant Riemannian metrics.

The objective of this paper is to provide a complete, rigorous, and modern exposition of Lie algebras of dimensions 1, 2, and 3. Rather than relying on ad-hoc basis transformations or coordinate-dependent calculations, we formulate the classification through:
\begin{enumerate}
    \item Structural derived series invariants in dimensions one and two.
    \item Bryant's exterior duality isomorphism $\Lambda^2\g \cong \g^* \otimes \Lambda^3\g$ in dimension three, reducing the Jacobi identity to a linear algebraic constraint.
    \item The spectral analysis of outer derivations and the Levi--Malcev decomposition.
    \item The determination of minimal faithful matrix representations ($\mu(\g)$).
    \item A deterministic, implementable algorithm for canonical algebra identification.
\end{enumerate}

\section{Theoretical Foundations and Structural Invariants}\label{sec:machinery}

All Lie algebras considered herein are finite-dimensional over the field of real numbers $\R$.

\begin{definition}[Characteristic Series and Ideals]
Let $\g$ be a Lie algebra.
\begin{enumerate}
    \item The \emph{derived series} is defined by $\g^{(0)} = \g$, and $\g^{(k+1)} = [\g^{(k)}, \g^{(k)}]$ for $k \ge 0$. The algebra $\g$ is \emph{solvable} if $\g^{(k)} = \{0\}$ for some $k \in \mathbb{N}$.
    \item The \emph{lower central series} is defined by $\g^1 = \g$, and $\g^{k+1} = [\g, \g^k]$ for $k \ge 1$. The algebra $\g$ is \emph{nilpotent} if $\g^k = \{0\}$ for some $k \in \mathbb{N}$.
    \item The \emph{center} of $\g$ is the abelian ideal $Z(\g) = \{x \in \g : [x, y] = 0, \, \forall y \in \g\}$.
    \item The \emph{nilradical} $\n \subset \g$ is the maximal nilpotent ideal of $\g$.
\end{enumerate}
\end{definition}

\begin{definition}[Trace Form and Unimodularity]
The \emph{trace form} (or modular covector) $\chi \in \g^*$ is defined by:
\begin{equation}
    \chi(x) \coloneqq \tr(\ad x), \qquad \forall x \in \g.
\end{equation}
Because $\tr([A, B]) = 0$ for any endomorphisms, $\chi([\g, \g]) = 0$. A Lie algebra $\g$ is called \emph{unimodular} if $\chi = 0$, which is equivalent to the requirement that the associated connected Lie group possesses a bi-invariant Haar measure.
\end{definition}

\begin{definition}[Cartan--Killing Form]
The \emph{Killing form} is the symmetric bilinear form $K \colon \g \times \g \to \R$ defined by:
\begin{equation}
    K(x, y) \coloneqq \tr(\ad x \circ \ad y).
\end{equation}
By Cartan's criterion, $\g$ is semisimple if and only if $K$ is non-degenerate.
\end{definition}

\begin{theorem}[Levi--Malcev Decomposition]\label{thm:levi}
Every finite-dimensional real Lie algebra $\g$ decomposes as a semidirect product:
\begin{equation}
    \g = \s \ltimes \rr,
\end{equation}
where $\rr$ is the solvable radical (the unique maximal solvable ideal), and $\s$ is a semisimple subalgebra (the Levi factor).
\end{theorem}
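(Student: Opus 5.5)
The plan is the classical cohomological proof. First I construct the radical $\rr$: if $\mathfrak{a},\mathfrak{b}$ are solvable ideals, then $(\mathfrak{a}+\mathfrak{b})/\mathfrak{b}\cong \mathfrak{a}/(\mathfrak{a}\cap\mathfrak{b})$ is solvable. Hence $\mathfrak{a}+\mathfrak{b}$ is solvable, being an extension of solvable algebras. By finite-dimensionality there is then a unique maximal solvable ideal $\rr$.

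The quotient $\g/\rr$ has no nonzero solvable ideals, since the preimage of one would be a solvable ideal strictly containing $\rr$. So $\g/\rr$ is semisimple, and by Cartan's criterion (quoted above) its Killing form is non-degenerate. The theorem therefore reduces to a splitting statement: the exact sequence $0\to\rr\to\g\to\g/\rr\to 0$ admits a Lie algebra section $\sigma\colon \g/\rr\to\g$. Given this, $\s=\sigma(\g/\rr)$ is a semisimple subalgebra with $\s\cap\rr=0$ and $\s+\rr=\g$, and since $\rr$ is an ideal, $\g=\s\ltimes\rr$.

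For the splitting I induct on $\dim\rr$. If $[\rr,\rr]\neq 0$, then $[\rr,\rr]$ is an ideal of $\g$, because it is characteristic in $\rr$. I apply induction to $\g/[\rr,\rr]$, whose radical $\rr/[\rr,\rr]$ has smaller dimension. This gives a subalgebra $\hh\supset[\rr,\rr]$ with $\hh/[\rr,\rr]\cong\g/\rr$. Then $\hh$ has radical $[\rr,\rr]$, of dimension less than $\dim\rr$ by solvability, and induction applied to $\hh$ produces the desired complement. So it remains to treat the case where $\rr$ is abelian.

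In that case $\rr$ is a $\g/\rr$-module. Pick any linear section $\sigma_0$; its failure to be a homomorphism,
\[
c(x,y)=[\sigma_0 x,\sigma_0 y]-\sigma_0[x,y],
\]
is a $2$-cocycle with values in $\rr$. Changing $\sigma_0$ by a linear map $f$ changes $c$ by $\delta f$. The splitting is therefore equivalent to $H^2(\g/\rr,\rr)=0$, and this is Whitehead's second lemma.

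The main obstacle is proving Whitehead's second lemma. My plan is the Casimir approach. Decompose $\rr$ into irreducible $\g/\rr$-modules, which is possible by Weyl's complete reducibility, itself proved via the Casimir element. On a nontrivial irreducible summand $V$, the Casimir $\Omega_V$ built from a non-degenerate trace form acts by a nonzero scalar. Since $\Omega_V$ acts on cochains by an operator that is null-homotopic, through the explicit homotopy formula involving dual bases, cohomology vanishes. On trivial summands, $H^2(\mathfrak{s},\R)=0$ follows from $H^1(\mathfrak{s},\mathfrak{s}^*)=0$, which holds because $\mathfrak{s}=[\mathfrak{s},\mathfrak{s}]$ and all derivations of $\mathfrak{s}$ are inner. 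One checks this using the Killing form identification $\mathfrak{s}^*\cong\mathfrak{s}$.

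Since the paper only needs dimensions $\le 3$, a shortcut is available. There the only possible nonzero Levi factors are $\mathfrak{sl}(2,\R)$ and $\mathfrak{so}(3)$, each of dimension $3$. So either $\g$ is solvable ($\s=0$) or $\g=\s$ ($\rr=0$), and the theorem becomes trivial in the range actually used.
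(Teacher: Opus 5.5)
The paper gives no proof of this theorem. It quotes Levi--Malcev in Section~\ref{sec:machinery} as a classical background result, so there is no argument of the paper's to compare yours against. What you have written is the standard cohomological proof (Chevalley--Eilenberg/Whitehead), and it is correct.

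\begin{itemize}
\item The sum of two solvable ideals is solvable, so the radical $\rr$ exists and is unique.
\item The reduction via the characteristic ideal $[\rr,\rr]$ works. One checks that $\hh\cap\rr=[\rr,\rr]$, so a complement $\s$ of $[\rr,\rr]$ in $\hh$ satisfies $\s\cap\rr=0$ and $\s+\rr=\g$.
\item The obstruction cocycle is right, and so is its change by $\delta f$ under a change of linear section.
\item Your Casimir homotopy $\tilde h\phi=\sum_i y_i\cdot\phi(x_i,\dots)$ does satisfy $d\tilde h+\tilde h d=\Omega_V\circ(\,\cdot\,)$ in every degree. This follows from invariance of the tensor $\sum_i x_i\otimes y_i$.
\end{itemize}

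Two small points.
\begin{itemize}
\item Over $\R$, Schur's lemma only tells you that $\Omega_V$ is a nonzero element of the division algebra $\operatorname{End}_\s(V)$. It is nonzero because $\tr\Omega_V=\dim(\s/\ker\rho)>0$. Invertibility is all your argument needs, so this costs nothing.
\item For trivial coefficients, the fact $\s=[\s,\s]$ is not actually used. What you need is $H^1(\s,\s^*)\cong\operatorname{Der}(\s)/\ad\s=0$, via the invariant Killing form. This gives $\omega(x,y)=K(u,[x,y])$, which is visibly a coboundary.
\end{itemize}

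Your final paragraph is the part most relevant to this paper. Every Lie algebra of dimension $\le 2$ is solvable (Theorems~\ref{thm:dim1} and~\ref{thm:dim2}). So if $\g$ is non-solvable with $\dim\g\le 3$, then $\g/\rr$ is nonzero semisimple of dimension $3$, which forces $\rr=0$. The theorem therefore collapses to the dichotomy ``solvable or semisimple.'' That is exactly the case split used in the proof of Theorem~\ref{thm:rep_minimality}.

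Your remark that the paper only needs dimension $\le 3$ is not quite literal. The paper also invokes the decomposition for the four-dimensional $\mathfrak{gl}(2,\R)=\mathfrak{sl}(2,\R)\oplus\R I_2$. There, however, the splitting is explicit and the general theorem is not needed. So your shortcut covers everything the paper actually uses, while your general argument is what the statement as written (all finite dimensions) requires.
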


\section{Classification in Dimensions One and Two}\label{sec:dim12}

\subsection{Dimension One}

\begin{theorem}\label{thm:dim1}
Up to isomorphism, there exists exactly one real Lie algebra of dimension one: the abelian Lie algebra $\R$.
\end{theorem}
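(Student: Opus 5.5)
The plan is to show two things: that a one-dimensional Lie algebra exists, and that any two are isomorphic. Both follow from alternation of the bracket.

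\emph{Existence.} Take $\R$ with the zero bracket $[x,y]=0$. Bilinearity and antisymmetry hold trivially. The Jacobi identity holds because every term in it vanishes. So $\R$ with the zero bracket is a Lie algebra of dimension one.

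\emph{Every one-dimensional Lie algebra is abelian.} Let $\g$ be any real Lie algebra with $\dim \g = 1$, and fix a basis vector $e$, so $\g = \R e$.
\begin{itemize}
\item Antisymmetry gives $[e,e] = -[e,e]$. Since we work over $\R$, where $2 \neq 0$, this forces $[e,e]=0$. (Equivalently, apply the alternating axiom $[x,x]=0$ directly.)
\item For arbitrary $x = \alpha e$ and $y = \beta e$, bilinearity gives
\begin{equation}
[x,y] = \alpha\beta\,[e,e] = 0 .
\end{equation}
\end{itemize}
Hence $\g$ is abelian.

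\emph{Uniqueness up to isomorphism.} Define the linear map $\varphi\colon \g \to \R$ by $\varphi(e) = 1$. It is a vector-space isomorphism. It also preserves brackets, since both brackets are identically zero:
\begin{equation}
\varphi([x,y]) = \varphi(0) = 0 = [\varphi(x),\varphi(y)] .
\end{equation}
So $\varphi$ is a Lie algebra isomorphism, and every one-dimensional real Lie algebra is isomorphic to $\R$ with the zero bracket.

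This can also be phrased with the invariants of Section~\ref{sec:machinery}: the derived ideal satisfies $[\g,\g] = 0$, so $\g$ is abelian and hence determined by its dimension alone.

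There is no genuine obstacle. The only point needing care is justifying $[e,e]=0$, which uses the alternating property (or characteristic $\neq 2$).
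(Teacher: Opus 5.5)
Your proposal is correct and follows the same route as the paper: fix a basis vector $e$, use the alternating property to get $[e,e]=0$, extend by bilinearity to conclude the bracket on $\g$ vanishes identically, and identify $\g$ with $\R$. You also check existence and write down the isomorphism $\varphi$ explicitly, which the paper leaves implicit.
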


\begin{proof}
Let $\g = \spanop_\R \{e_1\}$. By the alternating property of the Lie bracket, $[e_1, e_1] = 0$. Linearity implies $[u, v] = 0$ for all $u, v \in \g$. Hence, $\g$ is abelian, unimodular, and isomorphic to $\R$.
\end{proof}

\subsection{Dimension Two}

\begin{theorem}\label{thm:dim2}
Every two-dimensional real Lie algebra is isomorphic to exactly one of the following two algebras:
\begin{enumerate}
    \item The abelian Lie algebra $\R^2$:
    \begin{equation}
        [e_1, e_2] = 0.
    \end{equation}
    \item The non-abelian affine Lie algebra $\mathfrak{aff}(\R)$ (the algebra of the affine group of the real line):
    \begin{equation}
        [e_1, e_2] = e_2.
    \end{equation}
\end{enumerate}
\end{theorem}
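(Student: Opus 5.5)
The plan is to organize the argument around the derived ideal $[\g,\g]$, as the introduction to this section suggests. Let $\g$ be two-dimensional with basis $\{x, y\}$. By bilinearity and antisymmetry the bracket is determined by the single vector $[x,y]$. So $[\g,\g] = \spanop_\R\{[x,y]\}$ has dimension $0$ or $1$. This dimension is an isomorphism invariant, so the two cases will give non-isomorphic algebras. That settles the ``exactly one'' part of the statement once we know each case is a single class.

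\textbf{Case $\dim[\g,\g]=0$.} Here the bracket vanishes identically and $\g \cong \R^2$ with $[e_1,e_2]=0$.

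\textbf{Case $\dim[\g,\g]=1$.} Choose $e_2 \neq 0$ spanning $[\g,\g]$, and complete it to a basis $\{u, e_2\}$. Since $[\g,\g]$ is an ideal, $[u,e_2] = \lambda e_2$ for some $\lambda\in\R$. I must then show $\lambda \neq 0$. If $\lambda = 0$, then $[u,e_2]=0$ and $[e_2,e_2]=0$, so the whole bracket vanishes, contradicting $\dim[\g,\g]=1$. Setting $e_1 = \lambda^{-1}u$ gives $[e_1,e_2]=e_2$, which is the presentation of $\mathfrak{aff}(\R)$.

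I will also note that these brackets define genuine Lie algebras. In dimension two the Jacobi identity is automatic, since any triple of basis vectors contains a repeat and the Jacobiator is alternating and trilinear. Finally, I will observe that $\mathfrak{aff}(\R)$ is realized by the affine vector fields $\partial_x$ and $x\partial_x$, up to sign convention, which justifies the name.

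The only step needing care is $\lambda\neq0$, and even that is immediate. There is no real obstacle: the argument is elementary. The one point to state explicitly is that $\dim[\g,\g]$ is invariant under isomorphism, since this gives the uniqueness of the class. As an alternative invariant one could use the trace form: $\chi = 0$ for $\R^2$, while $\chi(e_1)=1$ for $\mathfrak{aff}(\R)$.
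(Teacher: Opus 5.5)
Your proposal is correct and follows essentially the same route as the paper's proof: a case split on $\dim[\g,\g]\le 1$, the normalization $e_1=\lambda^{-1}u$ in the one-dimensional case, and distinguishing the two classes by $\dim[\g,\g]$ (with the trace form as a backup invariant). Your explicit check that $\lambda\neq 0$ fills in a step the paper only asserts.
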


\begin{proof}
Let $\g$ be a two-dimensional real Lie algebra with basis $\{e_1, e_2\}$. The structure is completely determined by the single bracket $[e_1, e_2] \in \g$. The derived algebra $\g^2 = [\g, \g]$ is a linear subspace of $\g$, so $\dim \g^2 \in \{0, 1, 2\}$.

\begin{itemize}
    \item \textbf{Case $\dim \g^2 = 0$:} In this case, $[e_1, e_2] = 0$, so $\g$ is abelian, establishing $\g \cong \R^2$.
    
    \item \textbf{Case $\dim \g^2 = 1$:} Choose a non-zero basis vector $x_2 \in \g^2$. Extend $\{x_2\}$ to a basis $\{x_1, x_2\}$ of $\g$. Because $\g^2$ is an ideal, $[x_1, x_2] \in \g^2 = \R x_2$, which implies:
    \begin{equation}
        [x_1, x_2] = \lambda x_2, \qquad \lambda \in \R \setminus \{0\}.
    \end{equation}
    Define the normalized generators $e_1 \coloneqq \lambda^{-1} x_1$ and $e_2 \coloneqq x_2$. By bilinearity:
    \begin{equation}
        [e_1, e_2] = [\lambda^{-1} x_1, x_2] = \lambda^{-1} [x_1, x_2] = \lambda^{-1} (\lambda x_2) = e_2.
    \end{equation}
    The Jacobi identity is satisfied trivially since any bracket of three elements contains duplicate vectors in dimension two.
    
    \item \textbf{Impossibility of $\dim \g^2 = 2$:} If $\dim \g^2 = 2$, then $\g^2 = \g$. However, the bracket $[\g, \g] = \spanop_\R \{[e_1, e_2]\}$ is spanned by a single vector, meaning $\dim \g^2 \le 1$, a contradiction.
\end{itemize}
These two algebras are non-isomorphic because $\dim [\R^2, \R^2] = 0 \neq 1 = \dim [\mathfrak{aff}(\R), \mathfrak{aff}(\R)]$. Furthermore, $\R^2$ is unimodular, whereas $\mathfrak{aff}(\R)$ is non-unimodular since $\tr(\ad e_1) = 1 \neq 0$.
\end{proof}

\section{Geometric Foundations in Dimension Three: Bryant's Duality}\label{sec:bryant}

\subsection{The Hodge-Type Bracket Isomorphism}
Let $\g$ be a three-dimensional real Lie algebra. The space of alternating 2-tensors has dimension $\dim \Lambda^2 \g = \binom{3}{2} = 3$. Fixing an oriented volume form $\mu \in \Lambda^3 \g^* \setminus \{0\}$, the interior contraction induces a canonical vector space isomorphism:
\begin{equation}
    \Phi_\mu \colon \Lambda^2 \g \xrightarrow{\sim} \g^*, \qquad \Phi_\mu(u \wedge v)(w) \coloneqq \mu(u \wedge v \wedge w).
\end{equation}
Because $\dim \Lambda^2 \g = \dim \g = 3$, the Lie bracket $[\cdot, \cdot] \colon \Lambda^2 \g \to \g$ dualizes into a linear endomorphism:
\begin{equation}
    L \colon \g^* \to \g.
\end{equation}

Choosing a basis $\{e_1, e_2, e_3\}$ with dual basis $\{e^1, e^2, e^3\}$ such that $\mu(e_1, e_2, e_3) = 1$, the bracket structure constants $c_{jk}^l$ defined by $[e_j, e_k] = \sum_l c_{jk}^l e_l$ are equivalently encoded by a single $3 \times 3$ real matrix $C \in \mathfrak{gl}(3, \R)$:
\begin{equation}\label{eq:matrix_C}
    [e_j, e_k] = \sum_{m, l=1}^3 \epsilon_{jkm} C^{ml} e_l,
\end{equation}
where $\epsilon_{jkm}$ is the standard totally antisymmetric Levi-Civita permutation symbol with $\epsilon_{123} = 1$. Decomposing $C$ into its symmetric and skew-symmetric parts:
\begin{equation}
    C = S + \hat{a}, \qquad S = S^T, \quad \hat{a}^T = -\hat{a}.
\end{equation}
The skew-symmetric matrix $\hat{a}$ is dual to a vector $a = (a^1, a^2, a^3)^T \in \g^*$ via the cross product representation:
\begin{equation}
    \hat{a} = \begin{pmatrix} 0 & -a^3 & a^2 \\ a^3 & 0 & -a^1 \\ -a^2 & a^1 & 0 \end{pmatrix}, \qquad \hat{a}^{ml} = \sum_{p=1}^3 \epsilon^{mlp} a_p.
\end{equation}

\begin{theorem}[Bryant--Milnor Reduction]\label{thm:bryant}
Let $\g$ be a three-dimensional real Lie algebra represented by the matrix $C = S + \hat{a}$.
\begin{enumerate}
    \item The dual vector $a$ represents the trace covector:
    \begin{equation}
        a = \frac{1}{2}\chi, \qquad \text{where } \chi(x) = \tr(\ad x).
    \end{equation}
    In particular, $\g$ is unimodular if and only if $a = 0$ $($i.e., $C = S$ is symmetric$)$.
    \item The Jacobi identity is equivalent to the single linear matrix-vector equation:
    \begin{equation}\label{eq:jacobi_matrix}
        S \cdot a = 0.
    \end{equation}
\end{enumerate}
\end{theorem}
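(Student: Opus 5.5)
The plan is to work entirely in the fixed unimodular-volume basis $\{e_1,e_2,e_3\}$ with $\mu(e_1,e_2,e_3)=1$, and to translate both claims into index computations with the Levi-Civita symbol. The structure constants are $c_{jk}^l=\sum_m \epsilon_{jkm}C^{ml}$, and I will repeatedly use the contraction identities $\sum_{k,m}\epsilon_{jkm}\epsilon_{pkm}=2\delta_{jp}$ and $\sum_m \epsilon_{jkm}\epsilon_{mlp}=\delta_{jl}\delta_{kp}-\delta_{jp}\delta_{kl}$. A convenient reformulation is that in coordinates $[x,y]=C^{T}(x\times y)$. Here $\times$ is the cross product attached to $\epsilon$, so $\hat a\,v$ corresponds to $a\times v$ up to the transpose convention.

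\textbf{Part 1 (trace covector).} I compute $\chi(e_j)=\tr(\ad e_j)=\sum_k c_{jk}^k=\sum_{k,m}\epsilon_{jkm}C^{mk}$. Since $\epsilon_{jkm}$ is antisymmetric in $(k,m)$, the symmetric part $S$ contributes nothing, and only $\hat a$ survives. Substituting $\hat a^{mk}=\sum_p\epsilon^{mkp}a_p$ and applying the first contraction identity gives $\chi(e_j)=\pm 2a_j$.

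The sign is fixed by the orientation and index-placement convention in \eqref{eq:matrix_C} and in the definition of $\hat a$. If the literal conventions produce $-2a_j$, I would absorb the sign by replacing $a$ with $-a$, which is equivalent to using the transposed skew part. The statement $a=\tfrac12\chi$ should then be read with the convention that makes the constant $+\tfrac12$. The consequence ``unimodular $\iff a=0 \iff C=S$'' is insensitive to this sign, so it follows immediately.

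\textbf{Part 2 (Jacobi).} In dimension three the Jacobiator $J(x,y,z)=[[x,y],z]+\text{cyclic}$ is alternating and trilinear, so it is determined by the single vector $J(e_1,e_2,e_3)\in\g$. Jacobi is therefore exactly three scalar equations, matching the three components of $S\cdot a$.

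To compute it, I will write $J(e_1,e_2,e_3)=\tfrac12\sum_{j,k,n}\epsilon_{jkn}[[e_j,e_k],e_n]$ and expand using $c$. This gives the expression
\[
J^l=\sum \epsilon_{mnq}\,C^{mr}\,C^{ql}\,\delta_{rn}
\]
with a factor of $2$ coming from the first contraction identity, that is, $J^l$ is proportional to $\sum_{m,n,q}\epsilon_{mnq}C^{mn}C^{ql}$. The inner contraction $\sum_{m,n}\epsilon_{mnq}C^{mn}$ again kills $S$ and returns $\pm2a_q$. Hence $J\propto C^T a$, or $Ca$ depending on transposition. Writing $C=S+\hat a$, we have $\hat a\,a=a\times a=0$, and likewise for the transpose. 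So $J=\pm 2\,S a$, and Jacobi holds if and only if $Sa=0$.

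\textbf{Main obstacle.} The main obstacle is purely bookkeeping: getting the index contraction in the Jacobiator right, namely identifying that the double-$\epsilon$ sum collapses to the trace-type contraction of $C$ rather than to terms quadratic in $S$ that do not cancel. I would first do this coordinate-free as a cross-check. For an invertible $C$, the identity $(Mu)\times(Mv)=\det(M)\,M^{-T}(u\times v)$ lets one evaluate $\sum_{\mathrm{cyc}} C^T\big((C^T(x\times y))\times z\big)$ using the BAC--CAB rule; by density, the result extends to all $C$. I would also sanity-check the formula on $\mathfrak{so}(3)$ ($C=I$, $a=0$) and on a non-unimodular example such as $\mathfrak{aff}(\R)\oplus\R$, to pin down both the sign in Part 1 and the factor in Part 2.
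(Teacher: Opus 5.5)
Your Part~2 is correct and is essentially the paper's argument. The Jacobiator is determined by $J(e_1,e_2,e_3)$. The contraction $\sum_{m,n}\epsilon_{mnq}C^{mn}$ annihilates $S$ and returns a nonzero multiple of $a_q$, so $J$ is a nonzero multiple of $C^{T}a=Sa-\hat a\,a=Sa$. You are more explicit than the paper here: it silently uses $\hat a\,a=a\times a=0$ and drops the constant factor, which is $-2$ with the displayed $\hat a$. Since only the vanishing of $J$ matters, your $\pm$ in Part~2 is harmless, and the coordinate-free cross-check is not needed.

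The gap is in Part~1. The statement has a definite sign, $a=+\tfrac12\chi$, but you leave the constant as $\pm2$ and propose replacing $a$ by $-a$ if it comes out wrong. That redefines the claim rather than proving it. In fact, carried out with the formula you cite, $\hat a^{mk}=\sum_p\epsilon^{mkp}a_p$, your computation is forced: $\sum_{k,m}\epsilon_{jkm}\epsilon_{mkp}=-2\delta_{jp}$, so $\chi(e_j)=-2a_j$. The reason is that the paper's two descriptions of $\hat a$ differ by a sign. The displayed matrix is the hat map $\hat a\,v=a\times v$, i.e.\ $\hat a^{ml}=\sum_p\epsilon_{mpl}a_p=-\sum_p\epsilon_{mlp}a_p$, and that matrix is what the paper's proof actually uses. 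With it, $\tr(\ad e_1)=C^{32}-C^{23}=2\hat a^{32}=2a^1$, and cyclically $\chi(e_j)=2a_j$, which is exactly $a=\tfrac12\chi$. So state that you take $\hat a$ to be the displayed matrix, and the sign is determined. Your instinct that something was off was right, but the fix is to name the offending convention, not to leave a $\pm$. The unimodularity criterion and Part~2 are unaffected either way.
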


\begin{proof}
Evaluating the adjoint operator of $e_1$ via Equation~\eqref{eq:matrix_C}:
\begin{align}
    [e_1, e_2] &= C^{31} e_1 + C^{32} e_2 + C^{33} e_3, \\
    [e_1, e_3] &= -C^{21} e_1 - C^{22} e_2 - C^{23} e_3.
\end{align}
The trace of $\ad e_1$ is computed directly as:
\begin{equation}
    \tr(\ad e_1) = (\ad e_1)^2_2 + (\ad e_1)^3_3 = C^{32} - C^{23} = 2 \hat{a}^{32} = 2 a^1.
\end{equation}
Permuting indices cyclically yields $\chi(e_i) = 2 a^i$ for all $i \in \{1, 2, 3\}$, proving $a = \frac{1}{2}\chi$.

For the Jacobi identity, note that in dimension three, the Jacobi identity vanishes identically on duplicate arguments. It is entirely captured by the single cyclic relation:
\begin{equation}
    J \coloneqq [e_1, [e_2, e_3]] + [e_2, [e_3, e_1]] + [e_3, [e_1, e_2]] = 0.
\end{equation}
Using $[e_2, e_3] = \sum_l C^{1l} e_l$, $[e_3, e_1] = \sum_l C^{2l} e_l$, and $[e_1, e_2] = \sum_l C^{3l} e_l$, we expand:
\begin{align}
    J &= \sum_{l=1}^3 \left( C^{1l} [e_1, e_l] + C^{2l} [e_2, e_l] + C^{3l} [e_3, e_l] \right) \\
      &= \sum_{l,m,k=1}^3 \left( C^{1l} \epsilon_{1lm} C^{mk} + C^{2l} \epsilon_{2lm} C^{mk} + C^{3l} \epsilon_{3lm} C^{mk} \right) e_k \\
      &= \sum_{k=1}^3 \left( \sum_{l,m=1}^3 \epsilon_{jlm} C^{jl} C^{mk} \right) e_k.
\end{align}
Contracting $C = S + \hat{a}$ with the alternating tensor shows that the symmetric component $S$ contracted with $\epsilon_{jlm} S^{jl}$ vanishes identically. The only non-vanishing contribution reduces to:
\begin{equation}
    J = \sum_{k=1}^3 (S \cdot a)^k e_k = 0 \iff S \cdot a = 0,
\end{equation}
which establishes the theorem.
\end{proof}

\subsection{Gauge Transformations and Tensorial Behavior}
Under an arbitrary change of basis $e_i' = \sum_j P^j_i e_j$ where $P \in \mathrm{GL}(3, \R)$, the volume form transforms as $\mu' = (\det P) \mu$. The matrix $C$ transforms according to the congruence:
\begin{equation}
    C' = (\det P) P^{-1} C P^{-T}.
\end{equation}
Consequently, the symmetric matrix $S$ transforms as a matrix tensor density of weight $-1$, while the vector $a$ transforms covectorially:
\begin{equation}
    S' = (\det P) P^{-1} S P^{-T}, \qquad a' = P^T a.
\end{equation}

\section{\texorpdfstring{The Unimodular Sector ($a = 0$): Bianchi Types I, II, $\mathrm{VI}_0$, $\mathrm{VII}_0$, VIII, IX}{The Unimodular Sector (a = 0): Bianchi Types I, II, VI0, VII0, VIII, IX}}\label{sec:unimodular}

When $\g$ is unimodular, $a = 0$, so the Jacobi identity $S \cdot a = S \cdot 0 = 0$ is satisfied automatically for \emph{any} real symmetric matrix $S \in \mathrm{Sym}_3(\R)$. 

\begin{theorem}[Milnor's Unimodular Classification]\label{thm:milnor_unimod}
Let $\g$ be a three-dimensional unimodular real Lie algebra. Then $\g$ is completely determined up to isomorphism by the signature and rank of the symmetric matrix $S$:
\begin{center}
\renewcommand{\arraystretch}{1.2}
\begin{tabular}{lccll}
\toprule
\textbf{Signature of $S$} & $\operatorname{rank}(S)$ & \textbf{Bianchi Type} & \textbf{Standard Name} & \textbf{Thurston Geometry} \\
\midrule
$(0, 0, 0)$ & 0 & $\mathrm{I}$ & Abelian $\R^3$ & $\mathbb{E}^3$ \\
$(+, 0, 0)$ & 1 & $\mathrm{II}$ & Heisenberg $\hh_3$ & $\mathrm{Nil}$ \\
$(+, +, 0)$ & 2 & $\mathrm{VII}_0$ & Euclidean $\mathfrak{e}(2)$ & $\mathbb{E}^3$ \\
$(+, -, 0)$ & 2 & $\mathrm{VI}_0$ & Poincaré/Minkowski $\mathfrak{e}(1,1)$ & $\mathrm{Solv}$ \\
$(+, +, -)$ & 3 & $\mathrm{VIII}$ & Split Simple $\mathfrak{sl}(2,\R)$ & $\widetilde{\mathrm{SL}}(2,\R)$ or $\mathbb{H}^2 \times \R$ \\
$(+, +, +)$ & 3 & $\mathrm{IX}$ & Compact Simple $\mathfrak{so}(3)$ & $S^3$ \\
\bottomrule
\end{tabular}
\end{center}
\end{theorem}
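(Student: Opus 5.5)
The argument rests on the transformation law $S' = (\det P)\, P^{-1} S P^{-T}$ from the preceding subsection, together with Theorem~\ref{thm:bryant}. When $a=0$ the Jacobi identity imposes no constraint, so three-dimensional unimodular Lie algebras correspond exactly to symmetric matrices $S\in\mathrm{Sym}_3(\R)$. Two such matrices define isomorphic algebras precisely when they lie in the same orbit of this twisted congruence action of $\mathrm{GL}(3,\R)$. The proof therefore reduces to a normal-form problem for that action, in two parts: show the orbits are classified by the listed invariants, and show the six resulting algebras are pairwise non-isomorphic.

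\textbf{Reduction to normal form.} Set $Q = P^{-1}$, so that $S' = (\det Q)^{-1} Q S Q^{T}$.
\begin{enumerate}
\item By Sylvester's law of inertia, a suitable $Q$ brings $S$ to $\diag(\epsilon_1,\epsilon_2,\epsilon_3)$ with $\epsilon_i\in\{1,-1,0\}$, at the cost of the scalar factor $(\det Q)^{-1}$.
\item Replacing $Q$ by $tQ$ multiplies $S'$ by $t^{2}/t^{3} = t^{-1}$. Since $t$ may be any nonzero real, including negative, this absorbs an arbitrary nonzero scalar, and in particular an overall sign.
\item Hence the orbit of $S$ is determined by its rank together with its signature modulo $(p,q)\sim(q,p)$.
\end{enumerate}
This leaves the unordered classes $\{0\}$, $\{(1,0)\}$, $\{(2,0)\}$, $\{(1,1)\}$, $\{(2,1)\}$ and $\{(3,0)\}$, which are exactly the six rows of the table. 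The only point needing care is that the density factor $\det P$ does not destroy the inertia classification. Step~2 shows it acts only by a global rescaling, and that rescaling can reverse sign because $\det$ is odd in $t$. Conversely, a sign flip is the only identification that the rescaling introduces, so no two distinct rows are merged.

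\textbf{Identifying the algebras.} Use Equation~\eqref{eq:matrix_C} with $S=\diag(\lambda_1,\lambda_2,\lambda_3)$, which gives $[e_2,e_3]=\lambda_1 e_1$, $[e_3,e_1]=\lambda_2 e_2$ and $[e_1,e_2]=\lambda_3 e_3$. Reading off each row:
\begin{enumerate}
\item $\diag(0,0,0)$ gives the abelian algebra $\R^3$.
\item $\diag(1,0,0)$ gives the Heisenberg relation $[e_2,e_3]=e_1$.
\item $\diag(1,1,0)$ gives $\ad e_3$ acting as a rotation on $\spanop\{e_1,e_2\}$, which is $\mathfrak{e}(2)$.
\item $\diag(1,-1,0)$ gives $\ad e_3$ acting as a hyperbolic rotation, which is $\mathfrak{e}(1,1)$.
\item $\diag(1,1,1)$ gives the cross-product algebra $\mathfrak{so}(3)$.
\item $\diag(1,1,-1)$ gives $\mathfrak{sl}(2,\R)$, after matching with the basis $\tfrac12\diag(1,-1)$ together with the combinations $\tfrac12(E_{12}\pm E_{21})$.
\end{enumerate}

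\textbf{Non-isomorphism.} This follows from the orbit classification, but it can also be confirmed by intrinsic invariants:
\begin{enumerate}
\item $\dim[\g,\g]$ equals $\operatorname{rank} S$, namely $0,1,2,3$, which separates the rank classes.
\item Within rank $2$, $\mathfrak{e}(2)$ and $\mathfrak{e}(1,1)$ are distinguished by the eigenvalues of $\ad e_3$ on the derived ideal: they are $\pm i$ for $\mathfrak{e}(2)$ and $\pm1$ for $\mathfrak{e}(1,1)$.
\item Within rank $3$, $\mathfrak{so}(3)$ and $\mathfrak{sl}(2,\R)$ are distinguished by the Killing form, which is definite for $\mathfrak{so}(3)$ and indefinite for $\mathfrak{sl}(2,\R)$.
\end{enumerate}

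The Thurston-geometry column is descriptive rather than part of the isomorphism statement. I would handle it by citation to \citet{Milnor1976} and \citet{Scott1983} rather than prove it.
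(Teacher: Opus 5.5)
Your proposal is correct and follows essentially the paper's route. You bring $S$ to a diagonal normal form with entries in $\{1,-1,0\}$ under the weighted congruence $S'=(\det P)P^{-1}SP^{-T}$, use the odd scaling (orientation reversal) to identify $S$ with $-S$, and read off the six bracket tables. Your description of each orbit as the union of the congruence classes of $S$ and $-S$ makes the non-isomorphism step sharper than the paper's appeal to the ``signature'' of $S$ as an invariant.

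Since you plan to cite for the Thurston column, note that no citation will support the entry $\mathbb{H}^2\times\R$ for type VIII. $\mathfrak{sl}(2,\R)$ is perfect, so any copy of it inside the isometry algebra $\mathfrak{sl}(2,\R)\oplus\R$ of $\mathbb{H}^2\times\R$ lies in the first summand and cannot act transitively. That geometry is realized by the type III group instead.
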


\begin{proof}
Since $S$ is real and symmetric, there exists an orthogonal matrix $Q \in \mathrm{O}(3)$ such that $Q^{-1} S Q^{-T} = \diag(\lambda_1, \lambda_2, \lambda_3)$. Setting $P = Q \cdot \diag(d_1, d_2, d_3)$ with $d_i > 0$, the transformation $S' = (\det P) P^{-1} S P^{-T}$ allows scaling each non-zero eigenvalue while preserving or uniformly flipping signs.
\begin{itemize}
    \item \textbf{Rank 0:} $S = 0 \implies [e_i, e_j] = 0$. This is the abelian algebra $\R^3$ (Bianchi Type I).
    
    \item \textbf{Rank 1:} The eigenvalues have signs $(+, 0, 0)$ or $(-, 0, 0)$. Flipping orientation ($\det P < 0$) maps $S \mapsto -S$, so both are isomorphic. Normalizing the non-zero eigenvalue to $1$:
    \begin{equation}
        S = \diag(1, 0, 0) \implies [e_2, e_3] = e_1, \quad [e_3, e_1] = 0, \quad [e_1, e_2] = 0.
    \end{equation}
    This is the continuous 3D Heisenberg nilpotent Lie algebra $\hh_3$ (Bianchi Type II).
    
    \item \textbf{Rank 2 (Definite):} The signs are $(+, +, 0)$ (or equivalently $(-,-,0)$). Scaling eigenvalues to $+1$:
    \begin{equation}
        S = \diag(1, 1, 0) \implies [e_2, e_3] = e_1, \quad [e_3, e_1] = e_2, \quad [e_1, e_2] = 0.
    \end{equation}
    Setting $e_3 = -R$ (infinitesimal rotation), $[R, e_1] = -e_2$ and $[R, e_2] = e_1$. This is the Lie algebra of the 2D Euclidean motion group $\mathfrak{e}(2) = \mathfrak{iso}(2)$ (Bianchi Type $\mathrm{VII}_0$).
    
    \item \textbf{Rank 2 (Indefinite):} The signs are $(+, -, 0)$. Normalizing eigenvalues to $1$ and $-1$:
    \begin{equation}
        S = \diag(1, -1, 0) \implies [e_2, e_3] = e_1, \quad [e_3, e_1] = -e_2, \quad [e_1, e_2] = 0.
    \end{equation}
    This is the Poincaré algebra of $(1+1)$-Minkowski space $\mathfrak{e}(1,1) \cong \mathfrak{iso}(1,1)$ (Bianchi Type $\mathrm{VI}_0$).
    
    \item \textbf{Rank 3 (Definite):} The signs are $(+, +, +)$. Normalizing all eigenvalues to $+1$:
    \begin{equation}
        S = \diag(1, 1, 1) \implies [e_2, e_3] = e_1, \quad [e_3, e_1] = e_2, \quad [e_1, e_2] = e_3.
    \end{equation}
    This is the compact simple Lie algebra $\mathfrak{so}(3) \cong \mathfrak{su}(2)$ (Bianchi Type IX). The Killing form is negative-definite: $K = \diag(-2, -2, -2)$.
    
    \item \textbf{Rank 3 (Indefinite):} The signs are $(+, +, -)$. Normalizing eigenvalues to $(1, 1, -1)$:
    \begin{equation}
        S = \diag(1, 1, -1) \implies [e_2, e_3] = e_1, \quad [e_3, e_1] = e_2, \quad [e_1, e_2] = -e_3.
    \end{equation}
    This is the split simple Lie algebra $\mathfrak{sl}(2, \R)$ (Bianchi Type VIII). The Killing form has indefinite signature $(2, 1)$.
\end{itemize}
These six classes are mutually non-isomorphic because rank, signature, and Killing form signatures are differential invariants under the action of $\Aut(\g)$.
\end{proof}

\section{\texorpdfstring{The Non-Unimodular Sector ($a \neq 0$): Bianchi Types III, IV, V, $\mathrm{VI}_h$, $\mathrm{VII}_h$}{The Non-Unimodular Sector (a != 0): Bianchi Types III, IV, V, VIh, VIIh}}\label{sec:nonunimodular}

When $\g$ is non-unimodular, the trace covector satisfies $a \neq 0$. By a suitable choice of basis, we can align $a$ along the third dual basis vector:
\begin{equation}
    a = (0, 0, \alpha)^T, \qquad \alpha > 0.
\end{equation}
The Jacobi condition $S \cdot a = 0$ requires the third column of $S$ to vanish:
\begin{equation}
    S \cdot \begin{pmatrix} 0 \\ 0 \\ \alpha \end{pmatrix} = \begin{pmatrix} S_{13} \alpha \\ S_{23} \alpha \\ S_{33} \alpha \end{pmatrix} = \begin{pmatrix} 0 \\ 0 \\ 0 \end{pmatrix} \implies S_{13} = S_{23} = S_{33} = 0.
\end{equation}
Consequently, the matrix $C = S + \hat{a}$ takes the canonical block upper-triangular form:
\begin{equation}
    C = \begin{pmatrix} S_{11} & S_{12} & 0 \\ S_{12} & S_{22} & 0 \\ 0 & 0 & 0 \end{pmatrix} + \begin{pmatrix} 0 & -\alpha & 0 \\ \alpha & 0 & 0 \\ 0 & 0 & 0 \end{pmatrix} = \begin{pmatrix} S_{11} & S_{12} - \alpha & 0 \\ S_{12} + \alpha & S_{22} & 0 \\ 0 & 0 & 0 \end{pmatrix}.
\end{equation}

\begin{proposition}[Derived Algebra Structure for Non-Unimodular Algebras]\label{prop:nonunimod_derived}
For every non-unimodular three-dimensional real Lie algebra $\g$:
\begin{enumerate}
    \item The derived algebra $\g^2 = [\g, \g]$ is a two-dimensional abelian ideal: $\g^2 = \spanop_\R \{e_1, e_2\} \cong \R^2$.
    \item The algebra decomposes as a semidirect product $\g \cong \R^2 \rtimes_D \R e_3$, where $D \coloneqq \ad e_3|_{\g^2} \in \mathfrak{gl}(2, \R)$.
    \item The operator $D$ has non-zero trace: $\tr D = 2\alpha \neq 0$.
\end{enumerate}
\end{proposition}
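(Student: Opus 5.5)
The plan is to work directly from the normal form for $C$ derived just before the statement, read off the brackets from Equation~\eqref{eq:matrix_C}, and check each of the three claims against them. First I would justify the normalization $a=(0,0,\alpha)^T$ with $\alpha>0$. Since $a'=P^Ta$ and $a\neq 0$, one can pick $P$ with $P^Ta = \alpha e^3$. The remaining sign can be absorbed by composing with a reflection in the $e_3$ direction, which also rescales $S$ by $\det P$. Theorem~\ref{thm:bryant} then forces $S_{13}=S_{23}=S_{33}=0$.

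Reading off \eqref{eq:matrix_C} row by row gives
\begin{equation*}
[e_2,e_3]=S_{11}e_1+(S_{12}-\alpha)e_2,\qquad [e_3,e_1]=(S_{12}+\alpha)e_1+S_{22}e_2,\qquad [e_1,e_2]=0.
\end{equation*}
\begin{itemize}
\item Since $[e_1,e_2]=0$, the plane $V=\spanop_\R\{e_1,e_2\}$ is an abelian subalgebra.
\item Every bracket lands in $V$, so $[\g,\g]\subseteq V$ and in particular $V$ is an ideal.
\item Because $\g=V\oplus\R e_3$ as vector spaces, with $V$ an abelian ideal, $\g\cong\R^2\rtimes_D\R e_3$, where $D=\ad e_3|_V$.
\end{itemize}
In the basis $\{e_1,e_2\}$ this operator is
\begin{equation*}
D=\begin{pmatrix} S_{12}+\alpha & -S_{11}\\ S_{22} & -(S_{12}-\alpha)\end{pmatrix},
\end{equation*}
so $\tr D=2\alpha\neq0$. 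This is consistent with part~1 of Theorem~\ref{thm:bryant}, which gives $\chi(e_3)=2\alpha$. This settles items~2 and~3.

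The main obstacle is item~1, the equality $[\g,\g]=V$. Since $[V,V]=0$, the derived algebra is exactly $[\g,\g]=\operatorname{im}D$. We compute
\begin{equation*}
\det D=\alpha^2-S_{12}^2+S_{11}S_{22}=\alpha^2+\det\begin{pmatrix}S_{11}&S_{12}\\ S_{12}&S_{22}\end{pmatrix}.
\end{equation*}
Equality $[\g,\g]=V$ therefore holds precisely when this quantity is non-zero. It need not be: take $S_{11}=S_{22}=0$ and $S_{12}=\alpha$. Then $D=\diag(2\alpha,0)$ and $\g\cong\mathfrak{aff}(\R)\oplus\R$, which is Bianchi type III. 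Here $[\g,\g]=\R e_1$ is one-dimensional. Since $\tr D\neq0$, the operator $D$ is never zero, so in all cases $\operatorname{rank}D\ge1$.

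I would therefore prove item~1 in corrected form. The correct statement is that $\g$ always contains the two-dimensional abelian ideal $V$ with $[\g,\g]\subseteq V$. Equality holds iff $\det D\neq0$; otherwise $\dim[\g,\g]=1$, which is the type III case. Finally, I would note that $V$ is intrinsic even when $[\g,\g]\neq V$. For every $x\in V$ we have $\chi(x)=2a(x)=0$, so $V\subseteq\ker\chi$. Since $\chi\neq0$ and both spaces are two-dimensional, $V=\ker\chi$. This gives the coordinate-free description of the ideal in item~2.
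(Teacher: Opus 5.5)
Your proof is correct and follows the paper's route. You read the brackets off the normal form of $C$, note $[e_1,e_2]=0$ and $[\g,\g]\subseteq\spanop_\R\{e_1,e_2\}$, and compute $\tr D=2\alpha$. Your signs $[e_3,e_1]=\sum_l C^{2l}e_l$ and $[e_3,e_2]=-\sum_l C^{1l}e_l$ are the right ones; the paper's displayed formulas have both signs flipped, which would give $\tr D=-2\alpha$.

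Your correction of item~1 is also right, and it matches the paper's own parenthetical ``or $\dim\g^2=1$ when one eigenvalue is 0''. Since $[\g,\g]=\operatorname{im}D$ and $\det D=\alpha^2+S_{11}S_{22}-S_{12}^2$ can vanish (your example is Bianchi type~III), item~1 as stated is false. The provable statement is yours, with $D=\ad e_3|_{V}$ on the intrinsic abelian ideal $V=\ker\chi$ rather than $\ad e_3|_{\g^2}$.
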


\begin{proof}
Computing the brackets directly from Equation~\eqref{eq:matrix_C}:
\begin{align}
    [e_1, e_2] &= C^{31} e_1 + C^{32} e_2 + C^{33} e_3 = 0, \\
    [e_3, e_1] &= -C^{21} e_1 - C^{22} e_2 = -(S_{12} + \alpha) e_1 - S_{22} e_2 \in \spanop_\R \{e_1, e_2\}, \\
    [e_3, e_2] &= C^{11} e_1 + C^{12} e_2 = S_{11} e_1 + (S_{12} - \alpha) e_2 \in \spanop_\R \{e_1, e_2\}.
\end{align}
Thus, $[e_1, e_2] = 0$, proving that $\g^2 \subseteq \spanop_\R \{e_1, e_2\}$ is abelian. Since $\tr(\ad e_3) = 2\alpha \neq 0$, the action of $\ad e_3$ cannot be zero, which establishes that $\dim \g^2 = 2$ (or $\dim \g^2 = 1$ when one eigenvalue is 0). Hence, the classification reduces to classifying $D \in \mathfrak{gl}(2, \R)$ up to similarity and scaling modulo $\tr D \neq 0$.
\end{proof}

Scaling $e_3 \mapsto e_3 / \alpha$, we normalize $\tr D = 2$. The Jordan canonical form of $D$ on $\g^2 = \R^2$ gives rise to the non-unimodular Bianchi types:

\begin{theorem}[Classification of Non-Unimodular 3D Lie Algebras]\label{thm:nonunimod}
Every non-unimodular three-dimensional real Lie algebra is isomorphic to exactly one of the following classes:
\begin{enumerate}
    \item \textbf{Bianchi Type V (Isotropic Dilation):} $D = I_2$. Brackets:
    \begin{equation}
        [e_3, e_1] = e_1, \qquad [e_3, e_2] = e_2, \qquad [e_1, e_2] = 0.
    \end{equation}
    \item \textbf{Bianchi Type IV (Defective Jordan Block):} $D = \begin{pmatrix} 1 & 1 \\ 0 & 1 \end{pmatrix}$. Brackets:
    \begin{equation}
        [e_3, e_1] = e_1 + e_2, \qquad [e_3, e_2] = e_2, \qquad [e_1, e_2] = 0.
    \end{equation}
    \item \textbf{Bianchi Type $\mathrm{VI}_h$ (Hyperbolic / Split):} $D = \begin{pmatrix} 1 & 0 \\ 0 & h \end{pmatrix}$ with $h \le 0$ and $h \neq -1$. Brackets:
    \begin{equation}
        [e_3, e_1] = e_1, \qquad [e_3, e_2] = h e_2, \qquad [e_1, e_2] = 0.
    \end{equation}
    \item \textbf{Bianchi Type III (Direct Sum with $\mathfrak{aff}(\R)$):} The isolated boundary case $h = 0$ in the $\mathrm{VI}_h$ family, corresponding to $\mathfrak{aff}(\R) \oplus \R$:
    \begin{equation}
        [e_3, e_1] = e_1, \qquad [e_3, e_2] = 0, \qquad [e_1, e_2] = 0.
    \end{equation}
    \item \textbf{Bianchi Type $\mathrm{VII}_h$ (Elliptic / Spiral):} $D = \begin{pmatrix} h & -1 \\ 1 & h \end{pmatrix}$ with $h > 0$. Brackets:
    \begin{equation}
        [e_3, e_1] = h e_1 + e_2, \qquad [e_3, e_2] = -e_1 + h e_2, \qquad [e_1, e_2] = 0.
    \end{equation}
\end{enumerate}
\end{theorem}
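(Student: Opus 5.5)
Using Proposition~\ref{prop:nonunimod_derived}, the plan is to reduce the statement to classifying the pair $(\R^2, D)$ with $D=\ad e_3|_{\R^2}$ and $\tr D \neq 0$, up to the moves that preserve the semidirect structure. The first step is to identify these moves. The ideal $\n=\spanop_\R\{e_1,e_2\}$ is intrinsic: it contains $\g^2$, is abelian, and equals the kernel of $\chi$ (since $\chi(e_3)=2\alpha\neq0$ and $\chi$ vanishes on $e_1,e_2$). An isomorphism $\g\to\g'$ therefore carries $\n$ onto $\n'$. It carries $e_3$ to $c\,e_3'+n'$ with $c\neq0$ and $n'\in\n'$, and because $\n'$ is abelian the term $n'$ acts trivially on $\n'$. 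Hence $D'\sim cD$ (similarity composed with a nonzero scaling). Conversely, any such pair $(P,c)$ defines an isomorphism. The classification is thus that of $\{D\in\mathfrak{gl}(2,\R):\tr D\neq0\}$ modulo conjugation and $\R^\times$-scaling. Scaling by $c=2/\tr D$ normalizes $\tr D=2$, and this uses up the scaling freedom except for the identity. I will then rescale the displayed normal forms by a positive constant, which is legitimate since the list is only stated up to scaling.

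The second step is to run through the real Jordan forms of $D$.

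\textbf{Real eigenvalues, diagonalizable, equal:} $D$ is a scalar matrix, scaled to $I_2$. This gives Type V.

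\textbf{Real, one repeated eigenvalue, not diagonalizable:} $D$ is a Jordan block $\lambda(I+N)$ with $\lambda\neq0$, since the trace is nonzero. Scaling by $1/\lambda$ and conjugating the nilpotent entry to $1$ gives Type IV.

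\textbf{Distinct real eigenvalues $\lambda_1\neq\lambda_2$:} since $\lambda_1+\lambda_2\neq0$, I scale so that the eigenvalue of larger modulus equals $1$, giving $\diag(1,h)$ with $|h|\le1$, $h\neq1$, and $h\neq-1$ (the last from the nonzero trace). If $0<h<1$, the entries have the same sign. Here I expect a discrepancy with the stated range $h\le0$: the matrix $\diag(1,h)$ with $h\in(0,1)$ is a genuine non-unimodular algebra (Bianchi $\mathrm{VI}_h$ with the other sign convention, or $\mathrm{VII}$-free). So I would either extend the parameter range to $h\in(-1,1)\setminus\{0\}$, with Type III as $h=0$, or check whether the paper's convention absorbs it. The invariant $\det D/(\tr D)^2$ distinguishes these cases. \textbf{Complex eigenvalues $\mu\pm i\nu$ with $\nu>0$:} the trace condition forces $\mu\neq0$. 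The real Jordan form is $\begin{pmatrix}\mu&-\nu\\ \nu&\mu\end{pmatrix}$. Scaling by $1/\nu$ gives $h=\mu/\nu$. Scaling by $-1$ together with conjugation by $\diag(1,-1)$ flips the sign of $h$, so $h>0$ can be assumed. This gives Type $\mathrm{VII}_h$.

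The final step is non-isomorphism. Under $D\mapsto cPDP^{-1}$ three invariants are preserved: diagonalizability, the sign of the discriminant $(\tr D)^2-4\det D$, and the scale-invariant ratio $\det D/(\tr D)^2$. Together these separate Types V, IV, III, $\mathrm{VI}_h$ and $\mathrm{VII}_h$, and separate different values of $h$ within a family; for $\mathrm{VI}_h$ the value of $h$ is also determined as the ratio of eigenvalues after the normalization. I expect the main obstacle to be this bookkeeping of parameter ranges, namely making the $\mathrm{VI}_h$ range consistent and checking that distinct parameters are never identified. The intrinsic characterization of $\n$ as $\ker\chi$ is what makes the reduction to conjugacy-plus-scaling rigorous.
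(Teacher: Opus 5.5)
Your route is the paper's: reduce to $D=\ad e_3|_{\n}$ modulo conjugation and nonzero scaling, then run through the real Jordan forms. You carry it out more carefully than the paper does. The paper never justifies that an isomorphism must preserve $\spanop\{e_1,e_2\}$; your identification $\n=\ker\chi$ supplies this, and it correctly avoids using $\g^2$, since $\dim\g^2=1$ in Type III. The paper's separating invariant $(\tr D)^2/\det D$ is also undefined at $h=0$, whereas your $\det D/(\tr D)^2$ is defined everywhere.

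The discrepancy you flag is genuine. Commit to it rather than looking for a convention that absorbs it, because none does.
\begin{itemize}
\item \emph{A family is missing.} For $D=\diag(1,h)$ with $0<h<1$, the ratio $\det D/(\tr D)^2=h/(1+h)^2$ lies in $(0,\tfrac14)$. On the stated list the ratio is $\le 0$ for III and every listed $\mathrm{VI}_h$, equals $\tfrac14$ for IV and V, and exceeds $\tfrac14$ for $\mathrm{VII}_h$. So the theorem as written is false. The paper's own proof even enforces $h\in[-1,0)\cup(0,1)$, which contradicts its statement.
\item \emph{Uniqueness also fails.} You handle this only implicitly. For $h<0$, $h\neq-1$, scaling $\diag(1,h)$ by $1/h$ and swapping $e_1,e_2$ gives $\diag(1,1/h)$. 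For example, $\mathrm{VI}_{-2}\cong\mathrm{VI}_{-1/2}$. The paper's invariant $(1+h)^2/h$ is symmetric under $h\mapsto 1/h$, so it cannot separate these.
\end{itemize}

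Your normalization fixes both problems: you set the eigenvalue of larger modulus to $1$, which is well defined because equal moduli with distinct eigenvalues would force $\tr D=0$. To finish, do three things:
\begin{itemize}
\item State the corrected theorem explicitly, with $\mathrm{VI}_h$ for $h\in(-1,1)\setminus\{0\}$ and III at $h=0$.
\item Say that the listed values $h<-1$ are redundant.
\item Record the monotonicity that lets your invariant separate parameters within each family. The map $h\mapsto h/(1+h)^2$ has derivative $(1-h)/(1+h)^3>0$ on $(-1,1)$, and $h\mapsto(1+h^2)/(4h^2)$ is strictly decreasing on $h>0$.
\end{itemize}
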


\begin{proof}
Let $D = \ad e_3|_{\g^2}$. We scale $e_3$ so that the spectrum of $D$ satisfies canonical invariants:
\begin{itemize}
    \item \textbf{Case 1: Repeated Real Eigenvalues.} If $D$ is diagonalizable, $D$ is a multiple of $I_2$. Scaling yields $D = I_2$, which is Bianchi Type V. If $D$ is non-diagonalizable, its real Jordan canonical form is $\begin{pmatrix} \lambda & 1 \\ 0 & \lambda \end{pmatrix}$. Normalizing $\lambda = 1$ yields Bianchi Type IV.
    
    \item \textbf{Case 2: Distinct Real Eigenvalues.} The eigenvalues are $\lambda_1 \neq \lambda_2$. Normalizing $\lambda_1 = 1$, we set $h = \lambda_2/\lambda_1$. Inverting $e_3 \mapsto -e_3$ swaps $h \mapsto 1/h$. Enforcing $h \in [-1, 0) \cup (0, 1)$ eliminates redundant parameterizations. When $h = 0$, the action on $e_2$ is trivial, yielding $\mathfrak{aff}(\R) \oplus \R$ (Bianchi Type III). When $h = -1$, the trace is zero, which lands in the unimodular class $\mathrm{VI}_0$.
    
    \item \textbf{Case 3: Complex Conjugate Eigenvalues.} The eigenvalues are $\alpha \pm i \beta$ with $\beta > 0$ and $\alpha \neq 0$ (since $\g$ is non-unimodular). Normalizing $\beta = 1$ and enforcing $h = \alpha/\beta > 0$ by reflection $e_3 \mapsto -e_3$ yields the canonical rotation-dilation block of Bianchi Type $\mathrm{VII}_h$.
\end{itemize}
The invariant $I(D) = \frac{(\tr D)^2}{\det D}$ is invariant under similarity and nonzero scaling. For $\mathrm{VI}_h$, $I(D) = \frac{(1+h)^2}{h}$, which strictly classifies $h \le 0$. For $\mathrm{VII}_h$, $I(D) = \frac{4h^2}{h^2 + 1}$, distinguishing distinct values of $h > 0$.
\end{proof}

\section{Master Classification Table}\label{sec:master}

Table~\ref{tab:master} presents the complete classification of real Lie algebras of dimensions one, two, and three up to isomorphism.

\begin{table}[htbp]
\centering
\footnotesize
\setlength{\tabcolsep}{4.5pt}
\renewcommand{\arraystretch}{1.3}
\begin{tabularx}{\linewidth}{@{} l c l >{\raggedright\arraybackslash}X l c @{}}
\toprule
\textbf{Algebra $\g$} & $\dim$ & \textbf{Bianchi} & \textbf{Non-zero Lie Brackets} & \textbf{Parameters} & \textbf{Unimodular?} \\
\midrule
\multicolumn{6}{@{}l}{\textit{\textbf{I. Dimensions One and Two}}} \\
$\R$ & 1 & --- & Abelian ($[e_1, e_1] = 0$) & None & Yes \\
$\R^2$ & 2 & --- & Abelian ($[e_1, e_2] = 0$) & None & Yes \\
$\mathfrak{aff}(\R)$ & 2 & --- & $[e_1, e_2] = e_2$ & None & No \\
\midrule
\multicolumn{6}{@{}l}{\textit{\textbf{II. Dimension Three: Unimodular Sector ($a = 0$)}}} \\
$\R^3$ & 3 & $\mathrm{I}$ & Abelian & None & Yes \\
$\hh_3$ & 3 & $\mathrm{II}$ & $[e_2, e_3] = e_1$ & None & Yes \\
$\mathfrak{e}(1,1)$ & 3 & $\mathrm{VI}_0$ & $[e_2, e_3] = e_1$, $[e_3, e_1] = -e_2$ & None & Yes \\
$\mathfrak{e}(2)$ & 3 & $\mathrm{VII}_0$ & $[e_2, e_3] = e_1$, $[e_3, e_1] = e_2$ & None & Yes \\
$\mathfrak{sl}(2,\R)$ & 3 & $\mathrm{VIII}$ & $[e_1, e_2] = 2e_2$, $[e_1, e_3] = -2e_3$, $[e_2, e_3] = e_1$ & None & Yes \\
$\mathfrak{so}(3)$ & 3 & $\mathrm{IX}$ & $[e_1, e_2] = e_3$, $[e_2, e_3] = e_1$, $[e_3, e_1] = e_2$ & None & Yes \\
\midrule
\multicolumn{6}{@{}l}{\textit{\textbf{III. Dimension Three: Non-Unimodular Sector ($a \neq 0$)}}} \\
$\mathfrak{aff}(\R) \oplus \R$ & 3 & $\mathrm{III}$ & $[e_3, e_1] = e_1$ & None & No \\
$\g_{\mathrm{IV}}$ & 3 & $\mathrm{IV}$ & $[e_3, e_1] = e_1 + e_2$, $[e_3, e_2] = e_2$ & None & No \\
$\g_{\mathrm{V}}$ & 3 & $\mathrm{V}$ & $[e_3, e_1] = e_1$, $[e_3, e_2] = e_2$ & None & No \\
$\g_{\mathrm{VI}, h}$ & 3 & $\mathrm{VI}_h$ & $[e_3, e_1] = e_1$, $[e_3, e_2] = h e_2$ & $h \le 0, \; h \neq -1, 0$ & No \\
$\g_{\mathrm{VII}, h}$ & 3 & $\mathrm{VII}_h$ & $[e_3, e_1] = h e_1 + e_2$, $[e_3, e_2] = -e_1 + h e_2$ & $h > 0$ & No \\
\bottomrule
\end{tabularx}
\caption{Complete classification of real Lie algebras in dimensions 1, 2, and 3 up to isomorphism.}
\label{tab:master}
\end{table}

\section{Minimal Faithful Linear Representations}\label{sec:representations}

By Ado's Theorem, every finite-dimensional real Lie algebra admits a faithful linear representation $\rho \colon \g \hookrightarrow \mathfrak{gl}(n, \R)$. The minimal representation dimension:
\begin{equation}
    \mu(\g) \coloneqq \min \{ n \in \mathbb{N} : \exists \text{ injective Lie homomorphism } \rho \colon \g \hookrightarrow \mathfrak{gl}(n, \R) \},
\end{equation}
is a fundamental numerical invariant of $\g$.

\begin{table}[htbp]
\centering
\footnotesize
\setlength{\tabcolsep}{6pt}
\renewcommand{\arraystretch}{1.3}
\begin{tabularx}{\linewidth}{@{} l c >{\raggedright\arraybackslash}X @{}}
\toprule
\textbf{Lie Algebra $\g$} & $\mu(\g)$ & \textbf{Minimal Faithful Matrix Realization $\rho(x) \in \mathfrak{gl}(\mu, \R)$} \\
\midrule
$\R$ & 1 & $(x)$ \\
$\R^2$ & 2 & $\diag(x_1, x_2)$ \\
$\mathfrak{aff}(\R)$ & 2 & $\begin{pmatrix} x_1 & x_2 \\ 0 & 0 \end{pmatrix}$ \\
\midrule
$\R^3$ & 3 & $\diag(x_1, x_2, x_3)$ \\
$\hh_3$ & 3 & $\begin{pmatrix} 0 & x_2 & x_1 \\ 0 & 0 & x_3 \\ 0 & 0 & 0 \end{pmatrix}$ \\
$\mathfrak{e}(1,1)$ & 3 & $\begin{pmatrix} x_3 & 0 & x_1 \\ 0 & -x_3 & x_2 \\ 0 & 0 & 0 \end{pmatrix}$ \\
$\mathfrak{e}(2)$ & 3 & $\begin{pmatrix} 0 & -x_3 & x_1 \\ x_3 & 0 & x_2 \\ 0 & 0 & 0 \end{pmatrix}$ \\
$\mathfrak{so}(3)$ & 3 & $\begin{pmatrix} 0 & -x_3 & x_2 \\ x_3 & 0 & -x_1 \\ -x_2 & x_1 & 0 \end{pmatrix}$ \\
$\mathfrak{sl}(2,\R)$ & 2 & $\begin{pmatrix} x_1 & 2x_2 \\ x_3 & -x_1 \end{pmatrix}$ \\
\midrule
$\mathfrak{aff}(\R) \oplus \R$ & 2 & $\begin{pmatrix} x_1 & x_2 \\ 0 & x_3 \end{pmatrix} \cong \mathfrak{b}(2,\R)$ (Borel subalgebra of $\mathfrak{gl}(2,\R)$) \\
$\g_{\mathrm{IV}}, \g_{\mathrm{V}}, \g_{\mathrm{VI}, h}, \g_{\mathrm{VII}, h}$ & 3 & $\begin{pmatrix} x_3 D & v \\ 0_{1 \times 2} & 0 \end{pmatrix}$, where $v = \begin{pmatrix} x_1 \\ x_2 \end{pmatrix}$ and $D$ is the derivation on $\g^2$ \\
\bottomrule
\end{tabularx}
\caption{Minimal faithful linear matrix representations for low-dimensional real Lie algebras.}
\label{tab:representations}
\end{table}

\begin{theorem}[Minimality Characterization in Dimension Three]\label{thm:rep_minimality}
Let $\g$ be a three-dimensional real Lie algebra. Then:
\begin{equation}
    \mu(\g) = 2 \iff \g \cong \mathfrak{sl}(2,\R) \quad \text{or} \quad \g \cong \mathfrak{aff}(\R) \oplus \R \quad (\text{Bianchi Type III}).
\end{equation}
For all other three-dimensional Lie algebras, $\mu(\g) = 3$.
\end{theorem}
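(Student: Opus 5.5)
The plan is to bound $\mu(\g)$ on both sides and then settle exactly when the value $2$ occurs. First, $\mu(\g)\ge 2$ for every three-dimensional $\g$, since $\dim\mathfrak{gl}(1,\R)=1<3$ and no injective linear map exists. Second, $\mu(\g)\le 3$ for every three-dimensional $\g$. For this I will use the explicit realizations in Table~\ref{tab:representations}. For $\R^3$, $\hh_3$, $\mathfrak{e}(1,1)$, $\mathfrak{e}(2)$ and $\mathfrak{so}(3)$ the check is a direct bracket computation against the normal forms of Theorem~\ref{thm:milnor_unimod}. For the non-unimodular algebras $\R^2\rtimes_D\R e_3$ of Proposition~\ref{prop:nonunimod_derived}, the block matrix $\begin{pmatrix} x_3 D & v\\ 0 & 0\end{pmatrix}$ realizes the semidirect product. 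Its commutator is $\begin{pmatrix} 0 & x_3Dv'-x_3'Dv\\ 0&0\end{pmatrix}$, which matches the bracket, and the map is injective as long as $D\neq 0$; this holds because $\tr D\neq 0$. So everything reduces to deciding which three-dimensional algebras embed in $\mathfrak{gl}(2,\R)$.

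For the direction ``$\Leftarrow$'', the realizations $\begin{pmatrix} x_1 & 2x_2\\ x_3 & -x_1\end{pmatrix}$ for $\mathfrak{sl}(2,\R)$ and the upper-triangular Borel algebra $\mathfrak{b}(2,\R)\cong\mathfrak{aff}(\R)\oplus\R$ give $\mu\le 2$, hence $\mu=2$. For the latter, $\mathfrak{b}(2,\R)=\R I\oplus\{\text{traceless upper triangular}\}$, and the second summand is non-abelian two-dimensional, so it is $\mathfrak{aff}(\R)$ by Theorem~\ref{thm:dim2}.

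For ``$\Rightarrow$'', let $\hh\subset\mathfrak{gl}(2,\R)=\mathfrak{sl}(2,\R)\oplus\R I$ be a three-dimensional subalgebra; note $I$ is central. I split into two cases.

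\emph{Case $I\notin\hh$.} The projection $\pi\colon\hh\to\mathfrak{sl}(2,\R)$ along $\R I$ has kernel $\hh\cap\R I=0$, so it is a linear isomorphism by dimension count. It is also a Lie homomorphism. Indeed, because $I$ is central, $[x+f(x)I,\,y+f(y)I]=[x,y]$, and $\pi$ of a bracket is the bracket of the projections. Hence $\hh\cong\mathfrak{sl}(2,\R)$.

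\emph{Case $I\in\hh$.} Then $\hh=\R I\oplus(\hh\cap\mathfrak{sl}(2,\R))$, and $\mathfrak{k}=\hh\cap\mathfrak{sl}(2,\R)$ is a two-dimensional subalgebra of $\mathfrak{sl}(2,\R)$. I claim $\mathfrak{k}$ is non-abelian, and this is the step I expect to be the main obstacle. The argument is that the centralizer in $\mathfrak{sl}(2,\R)$ of any nonzero $x$ is $\R x$. To see this, $x^2=-\det(x)\,I$ for traceless $x$. If $x$ is not a scalar multiple of the identity (automatic for nonzero traceless $x$), any matrix commuting with $x$ lies in $\spanop\{I,x\}$, and intersecting with the traceless matrices leaves $\R x$. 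So no two-dimensional abelian subalgebra exists. Therefore $\mathfrak{k}\cong\mathfrak{aff}(\R)$ by Theorem~\ref{thm:dim2}, and $\hh\cong\mathfrak{aff}(\R)\oplus\R$.

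Combining the two cases, the only three-dimensional algebras with $\mu=2$ are $\mathfrak{sl}(2,\R)$ and Bianchi type III. Every other three-dimensional algebra therefore has $\mu=3$ by the two general bounds. These two are indeed distinct from the rest of the list by Table~\ref{tab:master}.
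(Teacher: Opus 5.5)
Your proof is correct, and on the decisive step it takes a different route from the paper. That step is identifying the three-dimensional subalgebras $\hh\subset\mathfrak{gl}(2,\R)$.

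The paper splits via the Levi decomposition. In the semisimple case it asserts that $\mathfrak{sl}(2,\R)$ is the only possibility, and excludes $\mathfrak{so}(3)$ by a remark about its real representations. In the solvable case it invokes Lie's theorem to conjugate $\hh$ into the Borel algebra $\mathfrak{b}(2,\R)$, which by dimension count is then all of $\hh$. It identifies $\mathfrak{b}(2,\R)\cong\mathfrak{aff}(\R)\oplus\R$ from its derived algebra and center.

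You split instead on whether the central element $I$ lies in $\hh$. The trace-removing projection handles $I\notin\hh$, and the centralizer computation in $\mathfrak{sl}(2,\R)$ handles $I\in\hh$.

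Your dichotomy gives a complete, elementary argument. You never need the fact that a three-dimensional algebra is either solvable or semisimple. You never need Lie's theorem, which is a statement over $\C$; the paper's use of it for \emph{real} triangularization silently requires an extra step. For instance, one must note that the complexified Borel fixes a unique line, which is therefore conjugation-invariant and hence real. You also never need the representation theory of $\mathfrak{so}(3)$. Its exclusion, like that of every other candidate, falls out because only two isomorphism types can occur. The paper's route is more conceptual, tying the result to general structure theory, but it leaves exactly those points sketched.

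A minor caution on the upper bound $\mu\le 3$. The table entry for $\mathfrak{e}(1,1)$ matches the normal form of Theorem~\ref{thm:milnor_unimod} only after a change of basis: it yields $[e_3,e_1]=e_1$ rather than $[e_3,e_1]=-e_2$. The $\mathfrak{sl}(2,\R)$ entry also has a stray factor $2$. A literal bracket check against those entries would therefore fail. Nothing is lost, though. The bound $\mu(\mathfrak{sl}(2,\R))\le 2$ is just the defining inclusion. Your own block realization applies verbatim to $\mathfrak{e}(1,1)$, $\mathfrak{e}(2)$, and indeed $\hh_3$, since each is $\R^2\rtimes_D\R$ with $D\neq 0$ traceless.
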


\begin{proof}
Suppose $\mu(\g) \le 2$. Then $\g$ is isomorphic to a three-dimensional Lie subalgebra of $\mathfrak{gl}(2, \R)$. Because $\dim \mathfrak{gl}(2, \R) = 4$, $\g$ must be a hyperplane subalgebra of $\mathfrak{gl}(2, \R)$. By the Levi decomposition of $\mathfrak{gl}(2, \R) = \mathfrak{sl}(2, \R) \oplus \R I_2$:
\begin{enumerate}
    \item \textbf{Case $\g$ is Semisimple:} The only 3D simple subalgebra of $\mathfrak{gl}(2, \R)$ is $\mathfrak{sl}(2, \R)$ itself. The algebra $\mathfrak{so}(3)$ cannot embed in $\mathfrak{gl}(2, \R)$ because every real representation of $\mathfrak{so}(3)$ has odd dimension or complex conjugate pairs of dimension $\ge 4$; thus $\mu(\mathfrak{so}(3)) > 2$. Since the vector representation of $\mathfrak{so}(3)$ is in $\mathfrak{so}(3) \subset \mathfrak{gl}(3, \R)$, $\mu(\mathfrak{so}(3)) = 3$.
    
    \item \textbf{Case $\g$ is Solvable:} By Lie's theorem, every solvable subalgebra of $\mathfrak{gl}(2, \C)$ is conjugate to upper-triangular matrices. For real subalgebras in $\mathfrak{gl}(2, \R)$, the algebra of upper-triangular matrices is the Borel subalgebra:
    \begin{equation}
        \mathfrak{b}(2, \R) = \left\{ \begin{pmatrix} a & b \\ 0 & c \end{pmatrix} : a, b, c \in \R \right\}.
    \end{equation}
    Because $\dim \mathfrak{b}(2, \R) = 3$, any real triangularizable 3D solvable subalgebra of $\mathfrak{gl}(2, \R)$ must be conjugate to all of $\mathfrak{b}(2, \R)$. Computing brackets in $\mathfrak{b}(2, \R)$:
    \begin{equation}
        \left[ \begin{pmatrix} a_1 & b_1 \\ 0 & c_1 \end{pmatrix}, \begin{pmatrix} a_2 & b_2 \\ 0 & c_2 \end{pmatrix} \right] = \begin{pmatrix} 0 & (a_1 - c_1)b_2 - (a_2 - c_2)b_1 \\ 0 & 0 \end{pmatrix}.
    \end{equation}
    The derived algebra is 1-dimensional, spanned by $E_{12}$, and the center is 1-dimensional, spanned by $I_2$. Thus:
    \begin{equation}
        \mathfrak{b}(2, \R) \cong \mathfrak{aff}(\R) \oplus \R \quad (\text{Bianchi Type III}).
    \end{equation}
\end{enumerate}
Therefore, no other 3D Lie algebra can embed in $\mathfrak{gl}(2, \R)$. Since $\dim \g = 3$, $\g$ cannot embed in $\mathfrak{gl}(1, \R) \cong \R$, forcing $\mu(\g) \ge 3$. The explicit $3 \times 3$ affine representations in Table~\ref{tab:representations} confirm $\mu(\g) = 3$.
\end{proof}

\section{Algorithmic Identification of Isomorphism Classes}\label{sec:algorithm}

Algorithm~\ref{alg:classification} presents a complete, coordinate-free decision procedure to classify any real Lie algebra $\g$ of dimension $n \le 3$ given by its structure constants $c_{ij}^k$.

\begin{breakablealgorithm}
\caption{Algorithmic Classification of Real Lie Algebras of Dimension $\le 3$}\label{alg:classification}
\begin{algorithmic}[1]
\Require Basis $\{e_1, \dots, e_n\}$ with structure constants $c_{ij}^k$ ($n \le 3$).
\Ensure Canonical Bianchi / isomorphism class and moduli parameters.

\If{$n = 1$} \Return $\R$
\ElsIf{$n = 2$}
    \State Compute derived ideal $\g^2 = [\g, \g]$.
    \If{$\dim \g^2 = 0$} \Return $\R^2$
    \Else \ \Return $\mathfrak{aff}(\R)$
    \EndIf
\EndIf

\State \Comment{\textbf{Dimension Three Analysis}}
\State Compute trace covector entries $a_m = \frac{1}{2} \sum_{i=1}^3 c_{im}^i$ for $m \in \{1, 2, 3\}$.
\State Dualize bracket to matrix $C^{ml} = \frac{1}{2} \sum_{j,k=1}^3 \epsilon^{jkm} c_{jk}^l$.
\State Compute symmetric component $S = \frac{1}{2}(C + C^T)$.

\If{$a = (0, 0, 0)^T$} \Comment{\textbf{Unimodular Sector}}
    \State Compute eigenvalues and signature of $S$.
    \If{$\operatorname{rank}(S) = 0$} \Return Bianchi Type I ($\R^3$)
    \ElsIf{$\operatorname{rank}(S) = 1$} \Return Bianchi Type II ($\hh_3$)
    \ElsIf{$\operatorname{rank}(S) = 2$ and $S$ is semi-definite} \Return Bianchi Type $\mathrm{VII}_0$ ($\mathfrak{e}(2)$)
    \ElsIf{$\operatorname{rank}(S) = 2$ and $S$ is indefinite} \Return Bianchi Type $\mathrm{VI}_0$ ($\mathfrak{e}(1,1)$)
    \ElsIf{$\operatorname{rank}(S) = 3$ and $S$ is definite} \Return Bianchi Type IX ($\mathfrak{so}(3)$)
    \ElsIf{$\operatorname{rank}(S) = 3$ and $S$ is indefinite} \Return Bianchi Type VIII ($\mathfrak{sl}(2,\R)$)
    \EndIf
\Else \Comment{\textbf{Non-Unimodular Sector}}
    \State Compute derived algebra $\g^2 = [\g, \g]$.
    \If{$\dim \g^2 = 1$} \Return Bianchi Type III ($\mathfrak{aff}(\R) \oplus \R$)
    \EndIf
    \State Choose $e_3 \notin \g^2$ such that $\tr(\ad e_3|_{\g^2}) = 2$.
    \State Compute eigenvalues $\{\lambda_1, \lambda_2\}$ of $D = \ad e_3|_{\g^2} \in \mathfrak{gl}(2, \R)$.
    \If{$\lambda_1 = \lambda_2 = 1$}
        \If{$D$ is diagonalizable} \Return Bianchi Type V
        \Else \ \Return Bianchi Type IV
        \EndIf
    \ElsIf{$\lambda_1, \lambda_2 \in \R$ and $\lambda_1 \neq \lambda_2$}
        \State Normalize dominant eigenvalue to 1; let $h \le 0$ be the non-dominant eigenvalue.
        \State \Return Bianchi Type $\mathrm{VI}_h$ with parameter $h$
    \ElsIf{$\lambda_{1,2} = \alpha \pm i \beta$ with $\beta > 0$}
        \State Set $h = \alpha / \beta > 0$.
        \State \Return Bianchi Type $\mathrm{VII}_h$ with parameter $h$
    \EndIf
\EndIf
\end{algorithmic}
\end{breakablealgorithm}

You can find the Python code for this algorithm and examples of its execution as a \href{https://colab.research.google.com/drive/19a94gTIskfwWKuaNdkJDuzILx2-k8SFs}{Colab Notebook}.

\section{Conclusion}\label{sec:conclusion}

We have presented a unified, coordinate-free classification of real Lie algebras of dimensions one, two, and three. In dimension three, Bryant's exterior duality $\Lambda^2\g \cong \g^*$ reduces the Jacobi identity to the linear orthogonality condition $S \cdot a = 0$, completely categorizing the unimodular algebras by the spectral signature of $S$ and the non-unimodular algebras by the outer derivations of the abelian derived ideal $\R^2$. By linking these algebraic structures to their minimal matrix representations via Ado's Theorem and formulating an algorithmic decision tree, this framework bridges theoretical classification with computational algebra and geometric applications in Thurston geometries and Bianchi cosmologies.

\end{document}